\newtheoremstyle{bracket}{1ex}{2ex}{\rm}{}{\bfseries}{}{0.8em}{\thmnumber{(#2)}}
\newtheoremstyle{thm}{1ex}{2ex}{\itshape}{}{\bfseries}{}{0.9em}{\thmnumber{(#2)}\thmname{ #1}\thmnote{ \rm (#3)}}
\newtheoremstyle{example}{1ex}{2ex}{\rm}{}{\bfseries}{}{0.8em}{\thmnumber{(#2)}\thmname{ #1}}
\theoremstyle{bracket}
\newtheorem{no}{}[section]
\theoremstyle{thm}
\newtheorem{prop}[no]{Proposition}
\newtheorem{lemma}[no]{Lemma}
\newtheorem{cor}[no]{Corollary}
\newtheorem{thm}[no]{Theorem}
\theoremstyle{example}
\newtheorem{exas}[no]{Examples}
\newcommand{\grmod}{{\sf GrMod}}
\newcommand{\grann}{{\sf GrAnn}}
\newcommand{\ann}{{\sf Ann}}
\newcommand{\catmod}{{\sf Mod}}
\newcommand{\ab}{{\sf Ab}}
\newcommand{\hm}[3]{{\rm Hom}_{#1}(#2,#3)}
\newcommand{\grhm}[4]{{}^{#1}{\rm Hom}_{#2}(#3,#4)}
\newcommand{\ps}{[\psi]}
\newcommand{\Id}{{\rm Id}}
\newcommand{\Z}{\mathbbm{Z}}
\newcommand{\NN}{{\bf N}}
\newcommand{\N}{\mathbbm{N}}
\newcommand{\C}{{\sf C}}
\DeclareMathOperator{\ke}{Ker}
\newcommand{\sq}{\hskip2pt\raisebox{.225ex}{\rule{.8ex}{.8ex}\hskip2pt}}
\newcommand{\snf}{\renewcommand{\thefootnote}{*}\footnotetext{The author was supported by the Swiss National Science Foundation.}}
\begin{document}

\title[Coarsenings, injectives and Hom functors]{Coarsenings, injectives and Hom functors\snf}
\author{Fred Rohrer}
\address{Universit\"at T\"ubingen, Fachbereich Mathematik, Auf der Morgenstelle 10, 72076 T\"u\-bingen, Germany}
\email{rohrer@mail.mathematik.uni-tuebingen.de}
\subjclass[2010]{Primary 13A02; Secondary 13C11}
\keywords{Graded ring, graded module, coarsening, refinement, injective module, small module, steady ring, graded Hom functor}

\begin{abstract}
It is characterized when coarsening functors between categories of graded modules preserve injectivity of objects, and when they commute with graded covariant Hom functors.
\end{abstract}

\maketitle


\section*{Introduction}
Throughout, groups and rings are understood to be commutative. By $\ab$, $\ann$ and $\catmod(R)$ for a ring $R$ we denote the categories of groups, rings, and $R$-modules, respectively. If $G$ is a group, then by a $G$-graded ring we mean a pair $(R,(R_g)_{g\in G})$ consisting of a ring $R$ and a family $(R_g)_{g\in G}$ of subgroups of the additive group of $R$ whose direct sum equals the additive group of $R$ such that for $g,h\in G$ it holds $R_gR_h\subseteq R_{g+h}$. If $(R,(R_g)_{g\in G})$ is a $G$-graded ring, then by a $G$-graded $R$-module we mean a pair $(M,(M_g)_{g\in G})$ consisting of an $R$-module $M$ and a family $(M_g)_{g\in G}$ of subgroups of the additive group of $M$ whose direct sum equals the additive group of $M$ such that for $g,h\in G$ it holds $R_gM_h\subseteq M_{g+h}$. If no confusion can arise then we denote a $G$-graded ring $(R,(R_g)_{g\in G})$ just by $R$, and a $G$-graded $R$-module $(M,(M_g)_{g\in G})$ just by $M$. Accordingly, for a $G$-graded ring $R$, a $G$-graded $R$-module $M$ and $g\in G$ we denote by $M_g$ the component of degree $g$ of $M$. Given $G$-graded rings $R$ and $S$, by a morphism of $G$-graded rings from $R$ to $S$ we mean a morphism of rings $u:R\rightarrow S$ such that $u(R_g)\subseteq S_g$ for $g\in G$, and given a $G$-graded ring $R$ and $G$-graded $R$-modules $M$ and $N$, by a morphism of $G$-graded $R$-modules from $M$ to $N$ we mean a morphism of $R$-modules $u:M\rightarrow N$ such that $u(M_g)\subseteq N_g$ for $g\in G$. We denote by $\grann^G$ and $\grmod^G(R)$ for a $G$-graded ring $R$ the categories of $G$-graded rings and $G$-graded $R$-modules, respectively, with the above notions of morphisms. In case $G=0$ we canonically identify $\grann^G$ with $\ann$ and $\grmod^G(R)$ with $\catmod(R)$ for a ring $R$. 

Let $\psi\colon G\twoheadrightarrow H$ be an epimorphism in $\ab$ and let $R$ be a $G$-graded ring. We consider the $\psi$-coarsening $R_{\ps}$ of $R$, i.e., the $H$-graded ring whose underlying ring is the ring underlying $R$ and whose component of degree $h\in H$ is $\bigoplus_{g\in\psi^{-1}(h)}R_g$. An analogous construction for graded modules yields the $\psi$-coarsening functor $\bullet_{\ps}\colon\grmod^G(R)\rightarrow\grmod^H(R_{\ps})$, coinciding for $H=0$ with the functor that forgets the graduation. The aim of this note is to study functors of this type.

Let $M$ be a $G$-graded $R$-module. Some properties of $M$ behave well under coarsening functors -- e.g., $M$ is projective (in $\grmod^G(R)$) if and only if $M_{\ps}$ is projective (in $\grmod^H(R_{\ps})$) --, but others do not. An example is injectivity. For $H=0$ it is well known that if $M_{\ps}$ is injective then so is $M$, but that the converse does not necessarily hold. However, the converse \textit{does} hold if $G$ is finite, as shown by N\v{a}st\v{a}sescu, Raianu and Van Oystaeyen (\cite{nro}). We generalize this to arbitrary $H$ by showing that the converse holds if $\ke(\psi)$ is finite, and we moreover show that this is the best possible without imposing further conditions on $R$ or $M$ (Theorem \ref{1.80}). One should note that finiteness of $\ke(\psi)$ is fulfilled if $G$ is of finite type and $\psi$ is the canonical projection onto $G$ modulo its torsion subgroup. Such coarsenings can be used to reduce the study of graduations by groups of finite type to that of (often easier) graduations by free groups of finite rank.

A further interesting question is whether coarsening functors commute with graded Hom functors. The $G$-graded covariant Hom functor $\grhm{G}{R}{M}{\bullet}$ of $M$ maps a $G$-graded $R$-module $N$ onto the $G$-graded $R$-module $$\grhm{G}{R}{M}{N}=\bigoplus_{g\in G}\hm{\grmod^G(R)}{M}{N(g)}$$ (where $\bullet(g)$ denotes shifting by $g$). There is a canonical monomorphism of functors $h^M_{\psi}\colon\grhm{G}{R}{M}{\bullet}_{\ps}\rightarrowtail\grhm{H}{R_{\ps}}{M_{\ps}}{\bullet_{\ps}}$. For $H=0$ this is an isomorphism if and only if $G$ is finite or $M$ is small, as shown by G\'omez Pardo, Militaru and N\v{a}st\v{a}sescu (\cite{gpmn}). We generalize this to arbitrary $H$ by showing that $h^M_{\psi}$ is an isomorphism if and only if $\ke(\psi)$ is finite or $M$ is small (Theorem \ref{2.70}). A surprising consequence is that if $h^M_{\psi}$ is an isomorphism for \textit{some} epimorphism $\psi$ with infinite kernel then $h^M_{\varphi}$ is an isomorphism for \textit{every} epimorphism $\varphi$ (Corollary \ref{2.80}).

The proofs of the aforementioned results are similar to and inspired by those in \cite{nro} and \cite{gpmn}. In particular, they partially rely on the existence of adjoint functors of coarsening functors, treated in the first section.


\section{Coarsening functors and their adjoints}\label{sec1}

\noindent\textit{Let $\psi\colon G\twoheadrightarrow H$ be an epimorphism in $\ab$.}\smallskip

We first recall the definition of coarsening and refinement functors for rings and modules, and the construction of some canonical morphisms of functors.

\begin{no}
A) For a $G$-graded ring $R$ there is an $H$-graded ring $R_{\ps}$ with underlying ring the ring underlying $R$ and with $H$-graduation $(\bigoplus_{g\in\psi^{-1}(h)}R_g)_{h\in H}$. For $u\colon R\rightarrow S$ in $\grann^G$ there is a $u_{\ps}\colon R_{\ps}\rightarrow S_{\ps}$ in $\grann^H$ with underlying map the map underlying $u$. This defines a functor $\bullet_{\ps}\colon\grann^G\rightarrow\grann^H$, called \textit{$\psi$-coarsening.}

\smallskip

B) For an $H$-graded ring $S$ there is a $G$-graded ring $S^{\ps}$ with $G$-graduation $(S_{\psi(g)})_{g\in G}$, so that its underlying additive group is $\bigoplus_{g\in G}S_{\psi(g)}$, and with multiplication given by the maps $S_{\psi(g)}\times S_{\psi(h)}\rightarrow S_{\psi(g)+\psi(h)}$ for $g,h\in G$ induced by the multiplication of $S$. For $v\colon S\rightarrow T$ in $\grann^H$ there is a $v^{\ps}\colon S^{\ps}\rightarrow T^{\ps}$ in $\grann^G$ with $v^{\ps}_g=v_{\psi(g)}$ for $g\in G$. This defines a functor $\bullet^{\ps}\colon\grann^H\rightarrow\grann^G$, called \textit{$\psi$-refinement.}
\end{no}

\begin{no}\label{1.26}
A) For a $G$-graded ring $R$, the coproduct in $\ab$ of the canonical injections $R_g\rightarrowtail\bigoplus_{f\in\psi^{-1}(\psi(g))}R_f=(R_{\ps})^{\ps}_g$ for $g\in G$ is a monomorphism $\alpha_{\psi}(R)\colon R\rightarrowtail(R_{\ps})^{\ps}$ in $\grann^G$, and the coproduct in $\ab$ of the restrictions $$(R_{\ps})^{\ps}_g=\bigoplus_{f\in\psi^{-1}(\psi(g))}R_f\twoheadrightarrow R_g$$ of the canonical projections $\prod_{f\in\psi^{-1}(\psi(g))}R_f\twoheadrightarrow R_g$ for $g\in G$ is an epimorphism $\delta_{\psi}(R)\colon(R_{\ps})^{\ps}\twoheadrightarrow R$ in $\grann^G$. Varying $R$ we get a monomorphism $\alpha_{\psi}\colon\Id_{\grann^G}\rightarrowtail(\bullet_{\ps})^{\ps}$ and an epimorphism $\delta_{\psi}\colon(\bullet_{\ps})^{\ps}\twoheadrightarrow\Id_{\grann^G}$.

\smallskip

B) For an $H$-graded ring $S$, the coproduct in $\ab$ of the codiagonals $$((S^{\ps})_{\ps})_h=\bigoplus_{g\in\psi^{-1}(h)}S_h\twoheadrightarrow S_h$$ for $h\in H$ is an epimorphism $\beta_{\psi}(S)\colon(S^{\ps})_{\ps}\twoheadrightarrow S$ in $\grann^H$. If $\ke(\psi)$ is finite, so that $\psi^{-1}(h)$ is finite for $h\in H$, then the coproduct in $\ab$ of the diagonals $S_h\rightarrowtail\prod_{g\in\psi^{-1}(h)}S_h=\bigoplus_{g\in\psi^{-1}(h)}S^{\ps}_g=((S^{\ps})_{\ps})_h$ for $h\in H$ is a monomorphism $\gamma_{\psi}(S)\colon S\rightarrowtail(S^{\ps})_{\ps}$ in $\grann^H$. Varying $S$ we get an epimorphism $\beta_{\psi}\colon(\bullet^{\ps})_{\ps}\twoheadrightarrow\Id_{\grann^H}$ and --- if $\ke(\psi)$ is finite --- a monomorphism $\gamma_{\psi}\colon\Id_{\grann^H}\rightarrowtail(\bullet^{\ps})_{\ps}$.
\end{no}

\begin{no}\label{1.25}
A) Let $R$ be a $G$-graded ring. For a $G$-graded $R$-module $M$ there is an $H$-graded $R_{\ps}$-module $M_{\ps}$ with underlying $R_{[0]}$-module the $R_{[0]}$-module underlying $M$ and with $H$-graduation $(\bigoplus_{g\in\psi^{-1}(h)}M_g)_{h\in H}$. For $u\colon M\rightarrow N$ in $\grmod^G(R)$ there is a $u_{\ps}\colon M_{\ps}\rightarrow N_{\ps}$ in $\grmod^H(R_{\ps})$ with underlying map the map underlying $u$. This defines an exact functor $$\bullet_{\ps}\colon\grmod^G(R)\rightarrow\grmod^H(R_{\ps}),$$ called \textit{$\psi$-coarsening.}

\smallskip

B) Let $S$ be an $H$-graded ring. For an $H$-graded $S$-module $M$ there is a $G$-graded $S^{\ps}$-module $M^{\ps}$ with $G$-graduation $(M_{\psi(g)})_{g\in G}$, so that its underlying additive group is $\bigoplus_{g\in G}M_{\psi(g)}$, and with $S^{\ps}$-action given by the maps $$S_{\psi(g)}\times M_{\psi(h)}\rightarrow M_{\psi(g)+\psi(h)}$$ for $g,h\in G$ induced by the $S$-action of $M$. For $u\colon M\rightarrow N$ in $\grmod^H(S)$ there is a $u^{\ps}\colon M^{\ps}\rightarrow N^{\ps}$ in $\grmod^G(S^{\ps})$ with $u^{\ps}_g=u_{\psi(g)}$ for $g\in G$. This defines an exact functor $\bullet^{\ps}\colon\grmod^H(S)\rightarrow\grmod^G(S^{\ps})$, called \textit{$\psi$-refinement.}

\smallskip

C) For a $G$-graded ring $R$, composing $$\bullet^{\ps}\colon\grmod^H(R_{\ps})\rightarrow\grmod^G((R_{\ps})^{\ps})$$ with scalar restriction $\grmod^G((R_{\ps})^{\ps})\rightarrow\grmod^G(R)$ by means of $\alpha_{\psi}(R)$ (\ref{1.26}~A)) yields an exact functor $\grmod^H(R_{\ps})\rightarrow\grmod^G(R)$, by abuse of language again denoted by $\bullet^{\ps}$ and called \textit{$\psi$-refinement.}
\end{no}

\begin{no}\label{1.36}
A) Let $R$ be a $G$-graded ring. For a $G$-graded $R$-module $M$, the coproduct in $\ab$ of the canonical injections $M_g\rightarrowtail\bigoplus_{f\in\psi^{-1}(\psi(g))}M_f=(M_{\ps})^{\ps}_g$ for $g\in G$ is a monomorphism $\alpha'_{\psi}(M)\colon M\rightarrowtail(M_{\ps})^{\ps}$ in $\grmod^G(R)$, and the coproduct in $\ab$ of the restrictions $(M_{\ps})^{\ps}_g=\bigoplus_{f\in\psi^{-1}(\psi(g))}M_f\twoheadrightarrow M_g$ of the canonical projections $\prod_{f\in\psi^{-1}(\psi(g))}M_f\twoheadrightarrow M_g$ for $g\in G$ is an epimorphism $\delta'_{\psi}(M)\colon(M_{\ps})^{\ps}\twoheadrightarrow M$ in $\grmod^G(R)$. Varying $M$ we get a monomorphism $\alpha'_{\psi}\colon\Id_{\grmod^G(R)}\rightarrowtail(\bullet_{\ps})^{\ps}$ and an epimorphism $\delta'_{\psi}\colon(\bullet_{\ps})^{\ps}\twoheadrightarrow\Id_{\grmod^G(R)}$.

\smallskip

B) For an $H$-graded $R_{\ps}$-module $M$, the coproduct in $\ab$ of the codiagonals $((M^{\ps})_{\ps})_h=\bigoplus_{g\in\psi^{-1}(h)}M_h\twoheadrightarrow M_h$ for $h\in H$ is an epimorphism $$\beta'_{\psi}(M)\colon(M^{\ps})_{\ps}\twoheadrightarrow M$$ in $\grmod^H(R_{\ps})$. If $\ke(\psi)$ is finite, so that $\psi^{-1}(h)$ is finite for $h\in H$, then the coproduct in $\ab$ of the diagonals $$M_h\rightarrowtail\prod_{g\in\psi^{-1}(h)}M_h=\bigoplus_{g\in\psi^{-1}(h)}M^{\ps}_g=((M^{\ps})_{\ps})_h$$ for $h\in H$ is a monomorphism $\gamma'_{\psi}(M)\colon M\rightarrowtail(M^{\ps})_{\ps}$ in $\grmod^G(R)$. Varying $M$ we get an epimorphism $\beta'_{\psi}\colon(\bullet^{\ps})_{\ps}\twoheadrightarrow\Id_{\grmod^H(R_{\ps})}$ and --- if $\ke(\psi)$ is finite --- a monomorphism $\gamma'_{\psi}\colon\Id_{\grmod^H(R_{\ps})}\rightarrowtail(\bullet^{\ps})_{\ps}$.
\end{no}

\begin{exas}\label{1.20}
A) If $H=0$ then $\bullet_{\ps}$ coincides with the forgetful functor $\grann^G\rightarrow\ann$ (for rings) or $\grmod^G(R)\rightarrow\catmod(R_{[0]})$ (for modules) that forgets the graduation.

\smallskip

B) Let $\psi\colon\Z/2\Z\rightarrow 0$ and let $S$ be a ring. The underlying additive group of $S^{\ps}$ is the group $S\oplus S$, its components of degree $\overline{0}$ and $\overline{1}$ are $S\times 0$ and $0\times S$, respectively, and its multiplication is given by $(a,b)(c,d)=(ac+bd,ad+cb)$ for $a,b,c,d\in S$.

\smallskip

C) Let $A$ be a ring and let $R$ be the $G$-graded ring with $R_0=A$ and $R_g=0$ for $g\in G\setminus 0$. Then, $R_{\ps}$ is the $H$-graded ring with $(R_{\ps})_0=A$ and $(R_{\ps})_h=0$ for $h\in H\setminus 0$, and $\grmod^G(R)$ and $\grmod^H(R_{\ps})$ are canonically isomorphic to the product categories $\catmod(A)^G$ and $\catmod(A)^H$, respectively. Under these isomorphisms, $\bullet_{\ps}$ and $\bullet^{\ps}$ correspond to functors $\catmod(A)^G\rightarrow\catmod(A)^H$ with $(M_g)_{g\in G}\mapsto(\bigoplus_{g\in\psi^{-1}(h)}M_g)_{h\in H}$ and $\catmod(A)^H\rightarrow\catmod(A)^G$ with $(M_h)_{h\in H}\mapsto(M_{\psi(g)})_{g\in G}$, respectively. Using this it is readily checked that for an $H$-graded $R_{\ps}$-module $M$ it holds $(M^{\ps})_{\ps}=M^{\oplus\ke(\psi)}$.
\end{exas}

For modules, $\psi$-coarsening is left adjoint to $\psi$-refinement (\cite[3.1]{nro}), and for $H=0$ the same holds for rings (\cite[1.2.2]{no2}). We recall now the result for modules and generalize the one for rings to arbitrary $H$.

\begin{prop}\label{1.37}
a) For a $G$-graded ring $R$ there is an adjunction $$\bigl(\grmod^G(R)\xrightarrow{\bullet_{\ps}}\grmod^H(R_{\ps}),\grmod^H(R_{\ps})\xrightarrow{\bullet^{\ps}}\grmod^G(R)\bigr)$$ with unit $\alpha'_{\psi}$ and counit $\beta'_{\psi}$.

b) There is an adjunction $$\bigl(\grann^G\xrightarrow{\bullet_{\ps}}\grann^H,\grann^H\xrightarrow{\bullet^{\ps}}\grann^G\bigr)$$ with unit $\alpha_{\psi}$ and counit $\beta_{\psi}$.
\end{prop}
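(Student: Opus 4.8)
The plan is to prove both adjunctions by verifying the two triangle (zig-zag) identities for the explicitly given unit and counit. Part~a) is \cite[3.1]{nro}, and part~b) is obtained by exactly the same bookkeeping, which I now sketch. By \ref{1.26}~B) we already know that $\alpha_\psi\colon\Id_{\grann^G}\rightarrowtail(\bullet_{\ps})^{\ps}$ and $\beta_\psi\colon(\bullet^{\ps})_{\ps}\twoheadrightarrow\Id_{\grann^H}$ are natural transformations whose components are morphisms of graded rings; hence the only thing left to check for b) is that, for every $G$-graded ring $R$, the composite $$R_{\ps}\xrightarrow{(\alpha_\psi(R))_{\ps}}((R_{\ps})^{\ps})_{\ps}\xrightarrow{\beta_\psi(R_{\ps})}R_{\ps}$$ equals $\Id_{R_{\ps}}$, and that, for every $H$-graded ring $S$, the composite $$S^{\ps}\xrightarrow{\alpha_\psi(S^{\ps})}((S^{\ps})_{\ps})^{\ps}\xrightarrow{(\beta_\psi(S))^{\ps}}S^{\ps}$$ equals $\Id_{S^{\ps}}$. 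Since every morphism appearing here is, degreewise, a canonical injection, a codiagonal, or a coproduct in $\ab$ of such, each composite can be checked in a single graded component.

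For the first identity I fix $h\in H$. Then $(R_{\ps})_h=\bigoplus_{g\in\psi^{-1}(h)}R_g$, and, using that $\psi(g)=h$ for $g\in\psi^{-1}(h)$, the component $\bigl(((R_{\ps})^{\ps})_{\ps}\bigr)_h$ identifies with the double sum $\bigoplus_{g\in\psi^{-1}(h)}\bigoplus_{f\in\psi^{-1}(h)}R_f$. An element of the summand $R_g$ of $(R_{\ps})_h$ is carried by $(\alpha_\psi(R))_{\ps}$ to the same element placed in the slot with outer index $g$ and inner index $g$; the codiagonal $\beta_\psi(R_{\ps})$, which sums over the outer index, then returns it to the slot $g$ of $\bigoplus_{f\in\psi^{-1}(h)}R_f=(R_{\ps})_h$. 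Hence the first composite is the identity.

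For the second identity I fix $g\in G$. Then $(S^{\ps})_g=S_{\psi(g)}$, and $\bigl(((S^{\ps})_{\ps})^{\ps}\bigr)_g=\bigl((S^{\ps})_{\ps}\bigr)_{\psi(g)}$ identifies with $\bigoplus_{f\in\psi^{-1}(\psi(g))}S_{\psi(g)}$. The map $\alpha_\psi(S^{\ps})$ in degree $g$ injects $S_{\psi(g)}$ as the $f=g$ summand of this sum, while $(\beta_\psi(S))^{\ps}$ in degree $g$ is by construction $(\beta_\psi(S))_{\psi(g)}$, i.e. the codiagonal $\bigoplus_{f\in\psi^{-1}(\psi(g))}S_{\psi(g)}\twoheadrightarrow S_{\psi(g)}$; their composite is $\Id_{S_{\psi(g)}}$. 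So the second composite is the identity as well, and b) follows. The same degreewise argument applied to $\alpha'_\psi$ and $\beta'_\psi$ gives a) (the scalar restriction of \ref{1.25}~C) affects only the module structure, not the underlying maps).

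The argument is routine; the main — and essentially the only — point requiring care will be the bookkeeping with the nested direct sums indexed by the fibres $\psi^{-1}(h)$ and $\psi^{-1}(\psi(g))$, together with the fact that $\psi$ is constant on each such fibre. In particular, in contrast with the morphisms $\gamma_\psi$ and $\gamma'_\psi$ of \ref{1.26}~B) and \ref{1.36}~B), no finiteness hypothesis on $\ke(\psi)$ is needed.
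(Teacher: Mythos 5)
Your verification of the two triangle identities for the stated unit and counit is correct, and this is exactly the routine check the paper dismisses with ``Straightforward'' (naturality and well-definedness of $\alpha_\psi$, $\beta_\psi$, $\alpha'_\psi$, $\beta'_\psi$ being already recorded in \ref{1.26} and \ref{1.36}). The degreewise bookkeeping with the fibres $\psi^{-1}(h)$, and the remark that the scalar restriction in \ref{1.25}~C) does not change the underlying maps, are both accurate, so nothing is missing.
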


\begin{proof}
Straightforward.
\end{proof}

In \cite[3.1]{nro} it was shown that if $\ke(\psi)$ is finite then $\psi$-refinement for modules is left adjoint to $\psi$-coarsening. For $H=0$ this was sharpened by the result that $\psi$-coarsening has a left adjoint if and only if $G$ is finite (\cite[2.5]{dnro}). We now generalize this to arbitrary $H$ and prove moreover the corresponding statement for rings. We will need the following remark on products of graded rings and modules.

\begin{no}\label{1.50}
A) The category $\grann^G$ has products, but $\bullet_{\ps}$ does not necessarily commute with them. The product $R=\prod_{i\in I}R^{(i)}$ of a family $(R^{(i)})_{i\in I}$ of $G$-graded rings in $\grann^G$ is a $G$-graded ring as follows. Its components are $\prod_{i\in I}R^{(i)}_g$ for $g\in G$, so that its underlying additive group is $\bigoplus_{g\in G}\prod_{i\in I}R^{(i)}_g$. For $i\in I$, the multiplication of $R^{(i)}$ is given by maps $R^{(i)}_g\times R^{(i)}_h\rightarrow R^{(i)}_{g+h}$ for $g,h\in G$, and their products $\prod_{i\in I}R^{(i)}_g\times\prod_{i\in I}R^{(i)}_h\rightarrow\prod_{i\in I}R^{(i)}_{g+h}$ for $g,h\in G$ define the multiplication of $R$.

\smallskip

B) Let $R$ be a $G$-graded ring. The category $\grmod^G(R)$ has products, but $\bullet_{\ps}$ does not necessarily commute with them. The product $M=\prod_{i\in I}M^{(i)}$ of a family $(M^{(i)})_{i\in I}$ of $G$-graded $R$-modules  in $\grmod^G(R)$ is a $G$-graded $R$-module as follows. Its components are $\prod_{i\in I}M^{(i)}_g$ for $g\in G$, so that its underlying additive group is $\bigoplus_{g\in G}\prod_{i\in I}M^{(i)}_g$. For $i\in I$, the $R$-action of $M^{(i)}$ is given by maps $R_g\times M^{(i)}_h\rightarrow M^{(i)}_{g+h}$ for $g,h\in G$, and their products $R_g\times\prod_{i\in I}M^{(i)}_h\rightarrow\prod_{i\in I}M^{(i)}_{g+h}$ for $g,h\in G$ define the $R$-action of $M$.
\end{no}

\begin{thm}\label{1.70}
a) If $R$ is a $G$-graded ring, then $$\bullet_{\ps}\colon\grmod^G(R)\rightarrow\grmod^H(R_{\ps})$$ has a left adjoint if and only if\/ $\ke(\psi)$ is finite, and then $$\bigl(\grmod^H(R_{\ps})\xrightarrow{\bullet^{\ps}}\grmod^G(R),\grmod^G(R)\xrightarrow{\bullet_{\ps}}\grmod^H(R_{\ps})\bigr)$$ is an adjunction with unit $\gamma'_{\psi}$ and counit $\delta'_{\psi}$.

b) $\bullet_{\ps}\colon\grann^G\rightarrow\grann^H$ has a left adjoint if and only if\/ $\ke(\psi)$ is finite, and then $$\bigl(\grann^H\xrightarrow{\bullet^{\ps}}\grann^G,\grann^G\xrightarrow{\bullet_{\ps}}\grann^H\bigr)$$ is an adjunction with unit $\gamma_{\psi}$ and counit $\delta_{\psi}$.
\end{thm}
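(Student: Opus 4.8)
The plan is to prove the two implications separately; the ``if'' direction will simultaneously establish the asserted description of the adjunction.

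Assume first that $\ke(\psi)$ is finite, so that the morphisms of functors $\gamma'_{\psi},\delta'_{\psi}$ of \ref{1.36} (and $\gamma_{\psi},\delta_{\psi}$ of \ref{1.26} in case b)) are defined. I would check directly that $(\gamma'_{\psi},\delta'_{\psi})$ satisfies the two triangle identities for an adjunction with $\bullet^{\ps}$ left adjoint to $\bullet_{\ps}$; then $\gamma'_{\psi}$ is automatically the unit and $\delta'_{\psi}$ the counit, and b) is handled identically. Since $\gamma'_{\psi}$ is built from the diagonals $M_h\to\bigoplus_{g\in\psi^{-1}(h)}M_h$ and $\delta'_{\psi}$ from the coordinate projections $\bigoplus_{f\in\psi^{-1}(\psi(g))}M_f\to M_g$, each triangle identity, read off on a homogeneous component, reduces to the two evident facts that a coordinate projection precomposed with a diagonal is the identity, and that the tuple of all coordinate projections applied to a diagonal returns the original element. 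This is routine.

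For the converse it is enough to show that if $\ke(\psi)$ is infinite then $\bullet_{\ps}$ has no left adjoint. A functor possessing a left adjoint is itself a right adjoint, hence preserves all limits; so it suffices to exhibit a family of $G$-graded $R$-modules (resp.\ of $G$-graded rings) whose product is not preserved by $\bullet_{\ps}$. Write $K:=\ke(\psi)$ (infinite), and in case a) assume $R\neq 0$ (as one must in a)), so that $R_0\ni 1\neq 0$. In case a) take the family $\bigl(R(-i)\bigr)_{i\in K}$ of shifts of $R$; in case b) take the constant family $\bigl(\Z[G]\bigr)_{i\in K}$ of copies of the group ring, whose homogeneous components are all isomorphic to $\Z$.

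By \ref{1.50} the degree-$0$ component of $\bigl(\prod_{i\in K}R(-i)\bigr)_{\ps}$ is $\bigoplus_{g\in K}\prod_{i\in K}R_{g-i}$, that of $\prod_{i\in K}\bigl(R(-i)_{\ps}\bigr)$ is $\prod_{i\in K}\bigoplus_{g\in K}R_{g-i}$, and the canonical comparison morphism sends a finitely supported family $(x_g)_{g\in K}$, with $x_g=(x_g^{(i)})_{i\in K}$, to the family whose $i$-th entry is $(x_g^{(i)})_{g\in K}$. This is not surjective: the element whose $i$-th entry is supported in degree $g=i$ with value $1\in R_0$, for every $i\in K$, lies in the target, whereas any preimage $(x_g)_{g\in K}$ would have $x_i\neq 0$ for every $i\in K$, contradicting finiteness of its support. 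Hence $\bullet_{\ps}$ does not preserve this product and has no left adjoint; in case b) the same computation, performed on underlying $G$-graded abelian groups with $R_{g-i}$ replaced throughout by the degree-$g$ component $\cong\Z$ of $\Z[G]$, produces a non-bijective comparison map and the same conclusion. The steps I expect to need care are matching the displayed map with the universal comparison map and pinning down the product formulas of \ref{1.50}; the one genuinely non-formal point, and the only place where infiniteness of $\ke(\psi)$ is used, is the finite-support obstruction just described.
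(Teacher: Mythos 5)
Your proposal is correct and follows essentially the same route as the paper: the ``if'' direction is the same routine verification that $\gamma'_{\psi},\delta'_{\psi}$ (resp.\ $\gamma_{\psi},\delta_{\psi}$) furnish the adjunction, and the ``only if'' direction is the paper's argument in contrapositive form, using the failure of $\bullet_{\ps}$ to preserve the product of the family $(R(-g))_{g\in\ke(\psi)}$ (resp.\ of copies of a group algebra of $G$) via exactly the same finite-support obstruction on the degree-$0$ component. Your explicit caveat that one must take $R\neq 0$ in part a) is a point the paper leaves implicit.
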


\begin{proof}
If $\ke(\psi)$ is finite then $\gamma'_{\psi}$ (for modules) and $\gamma_{\psi}$(for rings) are defined (\ref{1.36}~B), \ref{1.26}~B)). In both cases it is straightforward to check that $\bullet^{\ps}$ is left adjoint to $\bullet_{\ps}$.

We prove now the converse statement for modules, analogously to \cite[2.5.3]{no2}. Suppose $\bullet_{\ps}$ has a left adjoint and thus commutes with products (\cite[2.1.10]{ks}). We consider the family of $G$-graded $R$-modules $(M^{(g)})_{g\in\ke(\psi)}$ with $M^{(g)}=R(-g)$ for $g\in G$, so that $e_g=1_R\in M^{(g)}_g\setminus 0$ for $g\in G$. For $h\in\ke(\psi)$ we denote by $\pi^{(h)}\colon\prod_{g\in\ke(\psi)}M^{(g)}\rightarrow M^{(h)}$ and $\rho^{(h)}\colon\prod_{g\in\ke(\psi)}M^{(g)}_{\ps}\rightarrow M^{(h)}_{\ps}$ the canonical projections. There is a unique morphism $\xi$ in $\grmod^H(R_{\ps})$ such that for $h\in\ke(\psi)$ the diagram $$\xymatrix@R10pt{(\prod_{g\in\ke(\psi)}M^{(g)})_{\ps}\ar[rr]^{\xi}\ar[rd]_{(\pi^{(h)})_{\ps}}&&\prod_{g\in\ke(\psi)}M^{(g)}_{\ps}\ar[ld]^{\rho^{(h)}}\\&M^{(h)}_{\ps}&}$$ in $\grmod^H(R_{\ps})$ commutes. This $\xi$ is an isomorphism since $\bullet_{\ps}$ commutes with products. Taking components of degree $0$ we get a commutative diagram $$\xymatrix@C20pt@R10pt{\bigoplus\limits_{f\in\ke(\psi)}\prod\limits_{g\in\ke(\psi)}M^{(g)}_f\ar[rr]^{\xi_h}_{\cong}\ar[rd]&&\prod\limits_{g\in\ke(\psi)}\bigoplus\limits_{f\in\ke(\psi)}M^{(g)}_f\ar[ld]\\&\prod\limits_{f\in\ke(\psi)}\prod\limits_{g\in\ke(\psi)}M^{(g)}_f&}$$ in $\ab$, where the unmarked morphisms are the canonical injections (\ref{1.50}~B)). For $g\in\ke(\psi)$ we set $x^g_g=e_g\in M^{(g)}_g\setminus 0$ and $x^g_f=0\in M^{(g)}_f$ for $f\in\ke(\psi)\setminus\{g\}$. If $g\in\ke(\psi)$ then $\{f\in\ke(\psi)\mid x^g_f\neq 0\}$ has a single element, so that $((x^g_f)_{f\in\ke(\psi)})_{g\in\ke(\psi)}\in\prod_{g\in\ke(\psi)}\bigoplus_{f\in\ke(\psi)}M^{(g)}_f$. If $f\in\ke(\psi)$ then $x^f_f=e_f\neq 0$, hence $(x^g_f)_{g\in\ke(\psi)}\neq 0$, implying $$\{f\in\ke(\psi)\mid(x^g_f)_{g\in\ke(\psi)}\neq 0\}=\ke(\psi).$$ As $\xi_h$ is an isomorphism it follows $$((x^g_f)_{g\in\ke(\psi)})_{f\in\ke(\psi)}\in\bigoplus_{f\in\ke(\psi)}\prod_{g\in\ke(\psi)}M^{(g)}_f.$$ Thus, $\ke(\psi)=\{f\in\ke(\psi)\mid(x^g_f)_{g\in\ke(\psi)}\neq 0\}$ is finite.

Finally, the converse statement for rings is obtained analogously by considering the algebra $K[G]$ of $G$ over a field $K$, furnished with its canonical $G$-graduation, and the family $(R^{(g)})_{g\in\ke(\psi)}$ of $G$-graded rings with $R^{(g)}=K[G]$ for $g\in G$. Denoting by $\{e_g\mid g\in G\}$ the canonical basis of $K[G]$ and considering the elements $e_g\in R^{(g)}_g\setminus 0$ for $g\in G$ we can proceed as above.
\end{proof}


\section{Application to injective modules}\label{sec2}

\noindent\textit{We keep the hypothesis of Section \ref{sec1}. The symbols $\bullet_{\ps}$ and $\bullet^{\ps}$ refer always to coarsening functors for graded modules over appropriate graded rings.}\smallskip

In this section we apply the foregoing generalities to the question on how injective graded modules behave under coarsening functors. A lot of work on this question, but mainly in case $H=0$, was done by N\v{a}st\v{a}sescu et al. (e.g.~\cite{dnro}, \cite{nas}, \cite{nro}).

\begin{prop}\label{1.78}
Let $R$ be a $G$-graded ring and let $M$ be a $G$-graded $R$-module. If $M_{\ps}$ is injective then so is $M$.
\end{prop}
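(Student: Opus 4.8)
The plan is to exploit the adjunction from Proposition~\ref{1.37}, which states that $\bullet_{\ps}$ is left adjoint to $\bullet^{\ps}$, together with the fact (\ref{1.25}~A)) that $\bullet_{\ps}$ is exact. From these two facts the right adjoint $\bullet^{\ps}$ preserves injectivity of objects --- this is a standard consequence of having an exact left adjoint. So the module $(M_{\ps})^{\ps}$ is injective in $\grmod^G(R)$ whenever $M_{\ps}$ is injective in $\grmod^H(R_{\ps})$.

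Next I would recall from \ref{1.36}~A) that there is a canonical monomorphism $\alpha'_{\psi}(M)\colon M\rightarrowtail(M_{\ps})^{\ps}$ and, crucially, a canonical epimorphism $\delta'_{\psi}(M)\colon(M_{\ps})^{\ps}\twoheadrightarrow M$ in $\grmod^G(R)$. The composite $\delta'_{\psi}(M)\circ\alpha'_{\psi}(M)\colon M\rightarrow M$ is, by inspection of the definitions in \ref{1.36}~A) (the injection $M_g\rightarrowtail\bigoplus_{f\in\psi^{-1}(\psi(g))}M_f$ followed by the projection back onto $M_g$), equal to $\Id_M$. Hence $M$ is a direct summand --- indeed a retract --- of $(M_{\ps})^{\ps}$ in $\grmod^G(R)$.

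Finally, a retract of an injective object is injective: if $M$ is a retract of the injective object $(M_{\ps})^{\ps}$, then any monomorphism $M\rightarrowtail N$ can be extended to $(M_{\ps})^{\ps}$, and composing with the retraction $\delta'_{\psi}(M)$ gives an extension $N\rightarrow M$ of $\Id_M$. This completes the argument. I do not foresee a genuine obstacle here; the only point requiring a moment's care is verifying that $\delta'_{\psi}(M)\circ\alpha'_{\psi}(M)=\Id_M$, which is immediate from the explicit descriptions of these morphisms as built out of the structural injections and projections of the finite (or infinite) direct sums involved, so that componentwise in each degree $g\in G$ one composes the inclusion of the $g$-summand with the projection onto it.
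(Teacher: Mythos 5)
Your argument is correct. The adjunction $(\bullet_{\ps},\bullet^{\ps})$ of Proposition \ref{1.37}~a) together with exactness of $\bullet_{\ps}$ (\ref{1.25}~A)) and \ref{1.77}~A) does give that $(M_{\ps})^{\ps}$ is injective in $\grmod^G(R)$, and the componentwise computation showing $\delta'_{\psi}(M)\circ\alpha'_{\psi}(M)=\Id_M$ is right: in degree $g$ one includes $M_g$ as the $f=g$ summand of $\bigoplus_{f\in\psi^{-1}(\psi(g))}M_f$ and then projects back onto it. At that point you could simply quote \ref{1.77}~B): $\alpha'_{\psi}(M)$ is a section with injective target, so its source $M$ is injective. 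Note, however, that this is not quite the route the paper indicates: its proof is a reference to \cite[A.I.2.1]{no1}, whose standard argument is elementary and direct --- given a graded monomorphism $N'\rightarrowtail N$ and a graded map $N'\rightarrow M$, one extends it to an ungraded map $N\rightarrow M_{\ps}$ using injectivity of $M_{\ps}$ and then homogenizes by sending a homogeneous element $n$ of degree $g$ to the degree-$g$ component of its image. Your version repackages that homogenization categorically using the machinery the paper builds in Section \ref{sec1}, which has the merit of being uniform in $\psi$ (the classical reference treats $H=0$) and of running exactly parallel to the forward implication in the proof of Theorem \ref{1.80}; the direct argument buys independence from the adjunction formalism. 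Both are complete proofs.
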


\begin{proof}
Analogously to \cite[A.I.2.1]{no1}.
\end{proof}

The converse of \ref{1.78} does not necessarily hold; see \cite[A.I.2.6.1]{no1} for a counterexample with $G=\Z$ and $H=0$. But in \cite[3.3]{nro} it was shown that the converse does hold if $G$ is finite and $H=0$. We generalize this to the case of arbitrary $G$ and $H$ such that $\ke(\psi)$ is finite, and we moreover show that this is the best we can get without imposing conditions on $R$ and $M$. Our proof is inspired by \cite[3.14]{dnro}. We first need some remarks on injectives and cogenerators, and a (probably folklore) variant of the graded Bass-Papp Theorem; we include a proof for lack of reference.

\begin{no}\label{1.77}
A) A functor between abelian categories that has an exact left adjoint preserves injective objects (\cite[3.2.7]{pop}).

\smallskip

B) In an abelian category $\C$, a monomorphism with injective source is a section, and a section with injective target has an injective source (\cite[8.4.4--5]{ks}). If $\C$ fulfils AB4$^*$ then an object $A$ is an injective cogenerator if and only if every object is the source of a morphism with target $A^L$ for some set $L$ (\cite[5.2.4]{ks}). This implies (\cite[3.2.6]{pop}) that if $A$ is an injective cogenerator and $L$ is a nonempty set then $A^L$ is an injective cogenerator.

\smallskip

C) If $R$ is a $G$-graded ring then $\grmod^G(R)$ is abelian, fulfils AB5, and has a generator. Hence, it has an injective cogenerator (\cite[9.6.3]{ks}).

\smallskip

D) Let $R$ be a $G$-graded ring and let $M$ be a $G$-graded $R$-module. Analogously to \cite[X.1.8 Proposition 12]{a} one sees that $M$ is a cogenerator if and only if every simple $G$-graded $R$-module is the source of a nonzero morphism with target $M$. As $M$ is simple if $M_{\ps}$ is so, it follows that if $M$ is a cogenerator then so is $M_{\ps}$.
\end{no}

\begin{prop}\label{1.79}
A $G$-graded ring $R$ is noetherian\footnote{as a $G$-graded ring, i.e., ascending sequences of graded ideals are stationary} if and only if $E^{\oplus\N}$ is injective for every injective cogenerator $E$ in $\grmod^G(R)$.
\end{prop}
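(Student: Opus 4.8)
The plan is to adapt the classical (ungraded) Bass--Papp Theorem to the $G$-graded setting, where ``noetherian'' means the ascending chain condition on graded ideals. The statement has the convenient feature that it is phrased entirely in terms of injective cogenerators, which sidesteps the need to work with arbitrary injectives; this is what makes the argument go through cleanly and is presumably why it is stated this way. I would prove the two implications separately.

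For the forward direction, assume $R$ is $G$-noetherian and let $E$ be an injective cogenerator in $\grmod^G(R)$. The key fact is that over a $G$-noetherian graded ring, arbitrary direct sums of injective $G$-graded modules are injective — this is the graded Bass--Papp direction, proved exactly as in the ungraded case: given a graded ideal $\mathfrak{a}\subseteq R$ and a morphism $f\colon\mathfrak{a}\to\bigoplus_{i}E_i$, the image $f(\mathfrak{a})$ lands in a finite subsum because $\mathfrak{a}$ is finitely generated as a graded ideal (here one uses that each generator has finite support and each component lands in finitely many summands), and one extends over the finite subsum, which is injective as a finite direct product. Applying the Baer-type criterion in $\grmod^G(R)$ — testing injectivity against graded ideals, which is legitimate since $\grmod^G(R)$ has a generator, namely $\bigoplus_{g\in G}R(g)$, whose subobjects are the graded ideals with shifts — gives that $E^{\oplus\N}$ is injective.

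For the converse, suppose $R$ is not $G$-noetherian, so there is a strictly ascending chain $\mathfrak{a}_0\subsetneq\mathfrak{a}_1\subsetneq\cdots$ of graded ideals; set $\mathfrak{a}=\bigcup_n\mathfrak{a}_n$, a graded ideal. Fix an injective cogenerator $E$ in $\grmod^G(R)$, which exists by \ref{1.77}~C). For each $n$ choose a homogeneous element $x_n\in\mathfrak{a}_{n+1}\setminus\mathfrak{a}_n$; since $E$ is a cogenerator, $R/\ann(x_n)$ (suitably shifted so that the morphism is graded) admits a nonzero morphism into $E$, which pulls back to a graded morphism $\mathfrak{a}_{n+1}\to E$ that is nonzero on $x_n$ and kills $\mathfrak{a}_n$. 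Assembling these over all $n$ produces a graded morphism $f\colon\mathfrak{a}\to E^{\oplus\N}$ whose $n$-th coordinate is nonzero precisely on $x_n\bmod\mathfrak{a}_n$ (well-defined into the direct sum because any homogeneous element of $\mathfrak{a}$ lies in some $\mathfrak{a}_n$ and hence is killed by all but finitely many coordinates). Any extension of $f$ to $R\to E^{\oplus\N}$ would have image in a finite subsum, forcing all but finitely many coordinates of $f$ to vanish — contradiction. Hence $E^{\oplus\N}$ is not injective, and we may take this particular $E$ as our witness. (To match the ``every injective cogenerator'' phrasing one notes by \ref{1.77}~B) that $E^{\oplus\N}\cong(E')^{\oplus\N}$ for any two injective cogenerators up to taking further countable sums, or simply reruns the argument verbatim for an arbitrary injective cogenerator $E$, since nothing above used anything about $E$ beyond being an injective cogenerator.)

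The main obstacle I anticipate is the bookkeeping around shifts: ``$\ann(x_n)$'' and the quotient $R/\ann(x_n)$ must be interpreted as graded objects with the correct internal degree so that multiplication by $x_n$ becomes a graded morphism $R(\deg x_n)\to\mathfrak{a}_{n+1}$, and one must invoke \ref{1.77}~D) in the form that a cogenerator receives a nonzero morphism from every \emph{nonzero} cyclic graded module (equivalently, from every simple graded module, after passing to a simple quotient). The finiteness argument for the extension — that a morphism from $R$, or from any module generated by finitely many homogeneous elements, into a direct sum has image in a finite subsum — is routine but should be stated carefully since it is the crux of both directions. Everything else is a direct transcription of the ungraded proof into $\grmod^G(R)$, using that this category has a generator and satisfies AB5.
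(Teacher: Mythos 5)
Your proof is correct, and while the forward direction matches the paper's (both reduce to the graded Chase/Bass--Papp statement that over a graded-noetherian ring direct sums of injectives are injective), your converse takes a genuinely different route. The paper argues indirectly: assuming $E^{\oplus\N}$ is injective for every injective cogenerator $E$, it shows that \emph{every} countable sum of injectives $\bigoplus_i M_i$ is injective, by embedding each $M_i$ as a direct summand of $E^{L_i}$, then of $E^L$ with $L=\prod_i L_i$, so that $\bigoplus_i M_i$ is a section into $(E^L)^{\oplus\N}$, which is injective by hypothesis since $E^L$ is again an injective cogenerator; it then cites the graded Chase theorem to conclude noetherianity. You instead prove the contrapositive directly: from a strictly ascending chain of graded ideals you build a morphism $\mathfrak{a}=\bigcup_n\mathfrak{a}_n\to E^{\oplus\N}$ that cannot extend to $R$, using the cogenerator property of $E$ exactly where Chase's argument uses arbitrariness of the injectives (namely, to make the $n$-th coordinate nonzero on $x_n$ while killing $\mathfrak{a}_n$). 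Your route is more self-contained (it unfolds the Chase argument rather than citing it) and avoids the $E^L$ embedding trick; the paper's route is shorter given the citation and isolates cleanly why the cogenerator hypothesis suffices. Two small points of hygiene: the coordinate maps are most cleanly defined on all of $\mathfrak{a}$ at once as $\mathfrak{a}\twoheadrightarrow\mathfrak{a}/\mathfrak{a}_n\to E$ (choosing the second map nonzero on $\bar{x}_n$, which is where the cogenerator property enters), rather than on $\mathfrak{a}_{n+1}$ followed by an extension via injectivity of $E$ and rather than via $R/{\rm ann}(x_n)$, whose annihilator must in any case be taken modulo $\mathfrak{a}_n$; and your justification of the graded Baer criterion is loose (subobjects of $\bigoplus_{g}R(g)$ are not just shifted graded ideals), though the criterion itself is standard and proved by the usual Zorn argument with the graded ideals $(N':x)$ for homogeneous $x$.
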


\begin{proof}
Analogously to \cite[4.1]{chase} one shows that $R$ is noetherian if and only if countable sums of injective $G$-graded $R$-modules are injective. Thus, if $R$ is noetherian then $E^{\oplus\N}$ is injective for every injective cogenerator $E$. Conversely, suppose this condition to hold, let $(M_i)_{i\in\N}$ be a countable family of injective $G$-graded $R$-modules, and let $E$ be an injective cogenerator in $\grmod^G(R)$ (\ref{1.77}~C)). For $i\in\N$ there exist a nonempty set $L_i$ and a section $M_i\rightarrowtail E^{L_i}$ (\ref{1.77}~B)). Let $L=\prod_{i\in\N}L_i$. For $i\in\N$ the canonical projection $L\twoheadrightarrow L_i$ induces a monomorphism $E^{L_i}\rightarrowtail E^L$. Composition yields a section $M_i\rightarrowtail E^L$ (\ref{1.77}~B)). Taking the direct sum over $i\in\N$ we get a section $j\colon\bigoplus_{i\in\N}M_i\rightarrowtail(E^L)^{\oplus\N}$. Now, $E^L$ is an injective cogenerator (\ref{1.77}~B)), so $(E^L)^{\oplus\N}$ is injective by hypothesis, and as $j$ is a section thus so is $\bigoplus_{i\in\N}M_i$ (\ref{1.77}~B)). By the first sentence of the proof this yields the claim.
\end{proof}

\begin{thm}\label{1.80}
$\ke(\psi)$ is finite if and only if $$\bullet_{\ps}\colon\grmod^G(R)\rightarrow\grmod^H(R_{\ps})$$ preserves injectivity for every $G$-graded ring $R$.
\end{thm}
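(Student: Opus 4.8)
The plan is to prove the two implications separately, using the adjunction machinery of Section~\ref{sec1} for the easy direction and a cogenerator/counterexample construction for the hard one.

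\medskip\noindent\textbf{($\Rightarrow$).} Suppose $\ke(\psi)$ is finite. Then by Theorem~\ref{1.70}~a) the functor $\bullet^{\ps}$ is left adjoint to $\bullet_{\ps}$, and $\bullet^{\ps}$ is exact by \ref{1.25}~B)/C). By \ref{1.77}~A), a functor with an exact left adjoint preserves injective objects; hence $\bullet_{\ps}$ preserves injectivity, for \emph{every} $G$-graded ring $R$. This direction is immediate once the adjunction is in hand.

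\medskip\noindent\textbf{($\Leftarrow$).} For the contrapositive, assume $\ke(\psi)$ is infinite; I must produce \emph{one} $G$-graded ring $R$ and \emph{one} injective $G$-graded $R$-module $M$ such that $M_{\ps}$ is not injective. The natural candidate for $M$ is an injective cogenerator. Choose $E$ an injective cogenerator in $\grmod^G(R)$ (which exists by \ref{1.77}~C)); I would like to take $R$ noetherian as a $G$-graded ring but arranged so that $R_{\ps}$ is \emph{not} noetherian as an $H$-graded ring. The key computation is Examples~\ref{1.20}~C): for an $H$-graded $R_{\ps}$-module $N$ one has $(N^{\ps})_{\ps} = N^{\oplus \ke(\psi)}$, an infinite direct sum. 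Combined with \ref{1.79}, non-noetherianity of $R_{\ps}$ forces $F^{\oplus\N}$ to fail to be injective for some injective cogenerator $F$ in $\grmod^H(R_{\ps})$; and one checks via \ref{1.77}~D) and \ref{1.77}~B) (and the fact that $E_{\ps}$ is a cogenerator when $E$ is) that $E_{\ps}$ contains, or is, a suitable such $F$ up to the operations preserving injectivity, so that if $E_{\ps}$ were injective then $F^{\oplus\N}$ would be injective — a contradiction. So $E$ is injective but $E_{\ps}$ is not.

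\medskip It therefore remains to exhibit a $G$-graded ring that is noetherian as a $G$-graded ring but whose $\psi$-coarsening is not noetherian as an $H$-graded ring, when $\ke(\psi)$ is infinite. The natural choice, parallel to the group-algebra trick used at the end of the proof of Theorem~\ref{1.70}, is to take a field $K$ and the $G$-graded ring $R$ with $R_0 = K$ and $R_g = 0$ for $g \neq 0$ (so $\grmod^G(R) \cong \catmod(K)^G$ by \ref{1.20}~C)). Then $R_{\ps}$ has $(R_{\ps})_0 = K$ and $(R_{\ps})_h = 0$ otherwise, and $\grmod^H(R_{\ps}) \cong \catmod(K)^H$; here every graded module is injective, so this particular $R$ is too weak. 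Hence I instead need $R_0$ itself to contribute an infinite ascending chain after coarsening: the right object is something like the semigroup/monoid algebra $K[\ke(\psi)]$ (graded by $\ke(\psi) \hookrightarrow G$ via a splitting, or by pushing forward a suitable $G$-grading), which is noetherian in the $G$-graded sense on the nose but whose ungraded-in-the-$\ke\psi$-direction reduct acquires the non-finitely-generated ideal $\bigoplus_{g \in \ke(\psi)} K e_g$ after applying $\bullet_{\ps}$. Making this precise — identifying the correct graded ring and verifying both that it is $G$-graded noetherian and that $R_{\ps}$ is not $H$-graded noetherian — is the main obstacle; everything else is a formal consequence of \ref{1.79}, \ref{1.77}, and \ref{1.20}~C).
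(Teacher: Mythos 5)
The forward implication is exactly the paper's argument and is fine. The converse has a genuine gap, and your search for a remedy heads in the wrong direction.

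Your plan is: find $R$ that is $G$-graded noetherian with $R_{\ps}$ not $H$-graded noetherian, take $E$ an injective cogenerator of $\grmod^G(R)$, and argue that $E_{\ps}$ "contains, or is" a module $F^{\oplus\N}$ witnessing non-noetherianity of $R_{\ps}$ via \ref{1.79}. That last step is precisely the missing content: for an arbitrary injective cogenerator $E$ of the \emph{fine} category there is no reason $E_{\ps}$ should contain an infinite direct sum of copies of an injective cogenerator of $\grmod^H(R_{\ps})$ as a direct summand, and \ref{1.77}~B), D) do not supply this. Moreover your concrete candidate is unsound: for $\ke(\psi)=\Z$ the coarsening of $K[\ke(\psi)]$ is $K[t,t^{-1}]$, which \emph{is} noetherian (and $\bigoplus_g Ke_g$ is not an ideal --- each $e_g$ is a unit), so the "non-finitely-generated ideal after coarsening" does not exist; the paper's post-theorem remark uses that ring for a different purpose (a direct non-extendability argument, valid only for torsionfree kernels), not to break noetherianity.

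The paper's proof needs no noetherian $R$ at all; it only needs $R_{\ps}$ \emph{non}-noetherian, which is trivial to arrange: take $A$ any non-noetherian ring and $R$ with $R_0=A$, $R_g=0$ for $g\neq 0$. The decisive move you are missing is to start from an injective cogenerator $E$ of the \emph{coarse} category $\grmod^H(R_{\ps})$ and apply the refinement functor: $E^{\ps}$ is injective in $\grmod^G(R)$ because $\bullet^{\ps}$ is right adjoint to the exact functor $\bullet_{\ps}$ (\ref{1.37}~a), \ref{1.25}~A), \ref{1.77}~A)). If coarsening preserved injectivity, then $(E^{\ps})_{\ps}=E^{\oplus\ke(\psi)}$ (\ref{1.20}~C)) would be injective; since $\ke(\psi)$ is infinite, $E^{\oplus\N}$ is a direct summand of it, hence injective (\ref{1.77}~B)), and \ref{1.79} would force $R_{\ps}$ to be noetherian --- contradicting the choice of $A$. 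It is the identity $(E^{\ps})_{\ps}=E^{\oplus\ke(\psi)}$, applied to a module coming from \emph{below}, that manufactures the infinite direct sum your sketch never produces.
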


\begin{proof}
Finiteness of $\ke(\psi)$ implies that $\bullet_{\ps}$ preserves injectivity by \ref{1.25}~C), \ref{1.70}~a) and \ref{1.77}~A). For the converse we suppose that $\bullet_{\ps}$ preserves injectivity for every $G$-graded ring and assume that $\ke(\psi)$ is infinite. Let $A$ be a non-noetherian ring and let $R$ be the $G$-graded ring with $R_0=A$ and $R_g=0$ for $g\in G\setminus 0$. Then, $R_{\ps}$ is the $H$-graded ring with $(R_{\ps})_0=A$ and $(R_{\ps})_h)=0$ for $h\in H\setminus 0$, and in particular non-noetherian. Let $E$ be a injective cogenerator in $\grmod^H(R_{\ps})$ (\ref{1.77}~C)). It holds $(E^{\ps})_{\ps}=E^{\oplus\ke(\psi)}$ (\ref{1.20}~C)), and this $H$-graded $R_{\ps}$-module is injective by \ref{1.77}~A), \ref{1.37}~a), \ref{1.25}~A) and the hypothesis. Now, infinity of $\ke(\psi)$, \ref{1.77}~B) and \ref{1.79} yield the contradiction that $R_{\ps}$ is noetherian.
\end{proof}

\begin{no}
If $\ke(\psi)$ is infinite and torsionfree we can construct more interesting examples of $G$-graded rings $R$ such that $\bullet_{\ps}$ does not preserve injectivity than in the proof of \ref{1.80}. Indeed, let $A$ be the algebra of $\ke(\psi)$ over a field, furnished with its canonical $\ke(\psi)$-graduation. Let $R$ be the $G$-graded ring with $R_g=A_g$ for $g\in\ke(\psi)$ and $R_g=0$ for $g\in G\setminus\ke(\psi)$, so that $R_{\ps}$ is the $H$-graded ring with $(R_{\ps})_0=A_{[0]}$ and $(R_{\ps})_h=0$ for $h\in H\setminus 0$. The invertible elements of $R$ are precisely its homogeneous elements different from $0$ (\cite[11.1]{gilmer}), so that the $G$-graded $R$-module $R$ is injective. If $g\in\ke(\psi)\setminus 0$ then $x=1+e_g\in A$ (where $e_g$ denotes the canonical basis element of $A$ corresponding to $g$) is a nonhomogeneous non-zerodivisor of $A_{[0]}$ (\cite[8.1]{gilmer}), hence free and not invertible. So, there is a morphism of $A_{[0]}$-modules $\langle x\rangle_{A_{[0]}}\rightarrow A_{[0]}$ with $x\mapsto 1$ that cannot be extended to $A_{[0]}$, and thus the $H$-graded $R_{\ps}$-module $R_{\ps}$ is not injective.
\end{no}

The above result can be used to show that graded versions of covariant right derived cohomological functors commute with coarsenings with finite kernel (cf.~\cite{cglc}).


\section{Application to Hom functors}

\noindent\textit{We keep the hypotheses of Section \ref{sec2}. Let $R$ be a $G$-graded ring and let $M$ be a $G$-graded $R$-module. If no confusion can arise we write $\hm{}{\bullet}{\sq}$ instead of\/ $\hm{\grmod^G(R)}{\bullet}{\sq}$ for the Hom bifunctor with values in $\ab$.}\smallskip

As a second application of the generalities in Section 1 we investigate when coarsening functors commute with covariant graded Hom functors. For $H=0$ a complete answer was given by G\'omez Pardo, Militaru and N\v{a}st\v{a}sescu (\cite{gpmn}, see also \cite{gpn}). We generalize their result to arbitrary $H$, leading to the astonishing observation that if a covariant graded Hom functor commutes with some coarsening functor with infinite kernel then it commutes with every coarsening functor.

As in \cite{gpmn}, the notion of a small module turns out to be important. We start by recalling it and then prove a generalization of \cite[3.1]{gpmn} on coarsening of small modules and of steady rings.

\begin{no}
Let $I$ be a set, let $\NN=(N_i)_{i\in I}$ be a family of $G$-graded $R$-modules and let $\iota_j\colon N_j\rightarrowtail\bigoplus_{i\in I}N_i$ denote the canonical injection for $j\in I$. The monomorphisms $\hm{}{M}{\iota_j}$ in $\ab$ for $j\in I$ induce a morphism $\lambda_I^M(\NN)$ in $\ab$ such that the diagram $$\xymatrix@R15pt@C40pt{&\hm{}{M}{\bigoplus_{i\in I}N_i}\ar@{ (->}[r]&\hm{}{M}{\prod_{i\in I}N_i}\\\hm{}{M}{N_j}\ar@{ >->}[r]\ar@{ >->}[ru]^{\hm{}{M}{\iota_j}}&\bigoplus_{i\in I}\hm{}{M}{N_i}\ar@{ (->}[r]\ar[u]_{\lambda_I^M(\NN)}&\prod_{i\in I}\hm{}{M}{N_i},\ar[u]^{\cong}}$$ where the unmarked monomorphisms are the canonical injections and the unmarked isomorphism is the canonical one, commutes for $j\in I$. It follows that $\lambda_I^M(\NN)$ is a monomorphism. If $\NN$ is constant with value $N$ then we write $\lambda_I^M(N)$ instead of $\lambda_I^M(\NN)$. Varying $\NN$ we get a monomorphism $$\lambda_I^M\colon\bigoplus_{i\in I}\hm{}{M}{\bullet_i}\rightarrowtail\hm{}{M}{\bigoplus_{i\in I}\bullet_i}$$ of covariant functors from $\grmod^G(R)^I$ to $\ab$. If $I$ is finite then $\lambda_I^M$ is an isomorphism.
\end{no}

\begin{no}
A) If $N$ is a $G$-graded $R$-module then $M$ is called \textit{$N$-small} if $\lambda_I^M(N)$ is an isomorphism for every set $I$. Furthermore, $M$ is called \textit{small} if $\lambda_I^M$ is an isomorphism for every set $I$, and this holds if and only if $M$ is $N$-small for every $G$-graded $R$-module $N$ (\cite[1.1 i)]{gpmn}).

\smallskip

B) If $M$ is of finite type then it is small. The $G$-graded ring $R$ is called \textit{steady} if every small $G$-graded $R$-module is of finite type. Noetherian $G$-graded rings are steady, but the converse does not necessarily hold (\cite[3.5]{gpmn}, \cite[7$^{\circ}$; 10$^{\circ}$]{rentschler}). Furthermore, for every group $G$ there exists a $G$-graded ring that is not steady (\cite[p.~3178]{gpmn}).
\end{no}

\begin{prop}\label{2.30}
a) $M$ is small if and only if $M_{\ps}$ is small.

b) If $N$ is a $G$-graded $R$-module, then $M$ is $\bigoplus_{g\in G}N(g)$-small if and only if $M_{\ps}$ is $N_{\ps}$-small.
\end{prop}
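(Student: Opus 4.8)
The plan is to reduce both parts to a comparison of the relevant $\lambda$-morphisms under coarsening, exploiting that coarsening of modules is a left adjoint (\ref{1.37}~a)) and that $\bullet^{\ps}$ sends a constant family appropriately. First I would establish the basic compatibility: for a set $I$ and a family $\NN=(N^{(i)})_{i\in I}$ of $H$-graded $R_{\ps}$-modules, the functor $\bullet^{\ps}$ commutes with arbitrary coproducts (being a left adjoint, by \ref{1.37}~a)), and the refinement of a coproduct is the coproduct of the refinements; moreover $\hm{\grmod^H(R_{\ps})}{M_{\ps}}{N}\cong\hm{\grmod^G(R)}{M}{N^{\ps}}$ naturally in $N$ by the adjunction. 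Chasing these identifications through the defining square of $\lambda$ shows that $\lambda_I^{M_{\ps}}(\NN)$ is an isomorphism if and only if $\lambda_I^{M}((N^{(i)})^{\ps}_{i\in I})$ is, and that the constant family with value $N_{\ps}$ refines to the constant family with value $(N_{\ps})^{\ps}$.

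For part b), the key identity is $(N_{\ps})^{\ps}=\bigoplus_{g\in G}N(g)$ as $G$-graded $R$-modules (this is essentially $\alpha'_\psi$ composed with a description of the refinement of a coarsening; cf.\ \ref{1.36}~A)). Granting this, the observation above gives: $M_{\ps}$ is $N_{\ps}$-small $\iff$ $\lambda_I^{M_{\ps}}(N_{\ps})$ is an isomorphism for all $I$ $\iff$ $\lambda_I^M$ applied to the constant family with value $\bigoplus_{g\in G}N(g)$ is an isomorphism for all $I$ $\iff$ $M$ is $\bigoplus_{g\in G}N(g)$-small. The only subtlety is checking that the identification of the two $\lambda$'s is compatible with the canonical injections, so that ``being an isomorphism'' really does transfer; this is a diagram chase using naturality of the adjunction isomorphism and the fact that $\bullet^{\ps}$ preserves coproduct injections.

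For part a), small means $\lambda_I^M$ is an isomorphism of functors on $\grmod^G(R)^I$, equivalently $M$ is $N$-small for every $G$-graded $R$-module $N$. If $M_{\ps}$ is small, then by part b) $M$ is $\bigoplus_{g\in G}N(g)$-small for every $N$; since $M$ is $P$-small whenever $M$ is $Q$-small and $P$ is a direct summand of $Q$ (a subobject-style argument on $\lambda$, or directly from \cite[1.1]{gpmn}), and $N$ is a direct summand of $\bigoplus_{g\in G}N(g)$ (the summand indexed by $0$), it follows that $M$ is $N$-small for every $N$, i.e.\ $M$ is small. Conversely, if $M$ is small, then for every $H$-graded $R_{\ps}$-module $N$ the module $N^{\ps}$ is a $G$-graded $R$-module, so $M$ is $\bigoplus_{g\in G}N^{\ps}(g)$-small, hence by part b) $M_{\ps}$ is $(N^{\ps})_{\ps}$-small; but $N$ is a direct summand of $(N^{\ps})_{\ps}$ via $\gamma'_\psi$ when $\ke(\psi)$ is finite, and when $\ke(\psi)$ is infinite one instead uses $\beta'_\psi\colon(N^{\ps})_{\ps}\twoheadrightarrow N$ together with the fact that smallness passes to quotients that are split\,---\,so to avoid the finiteness case split, it is cleaner to argue directly that $M$ small $\Rightarrow$ $M_{\ps}$ small by noting $\hm{\grmod^H(R_{\ps})}{M_{\ps}}{\bullet}$ is computed via $\hm{\grmod^G(R)}{M}{\bullet^{\ps}}$, which commutes with coproducts because $M$ is small and $\bullet^{\ps}$ does. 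The main obstacle, and the part requiring genuine care, is this last direction: verifying that the adjunction-transported description of $\lambda^{M_{\ps}}$ genuinely identifies it (compatibly with all structure maps) with a $\lambda^{M}$ built from refined families, rather than merely relating their domains and codomains abstractly.
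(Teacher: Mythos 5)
Your argument is correct and follows essentially the same route as the paper, which simply cites \cite[1.3 i)--ii)]{gpmn} for the formal transfer of (relative) smallness along the adjunction of \ref{1.37}~a), together with the identity $(N_{\ps})^{\ps}=\bigoplus_{g}N(g)$ and the commutation of $\bullet_{\ps}$ and $\bullet^{\ps}$ with direct sums; you re-derive those transfer lemmas inline. One correction: in \ref{1.37}~a) the functor $\bullet^{\ps}$ is the \emph{right} adjoint (it is a left adjoint only when $\ke(\psi)$ is finite, by \ref{1.70}~a)), so its commutation with coproducts does not follow from that adjunction --- it holds instead because direct sums of graded modules are computed degreewise and $(\bullet^{\ps})_g=\bullet_{\psi(g)}$, which is the ``fact'' the paper invokes directly.
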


\begin{proof}
Immediately from \ref{1.37}~a), \cite[1.3 i)--ii)]{gpmn}, and the facts that $\bullet_{\ps}$ and $\bullet^{\ps}$ commute with direct sums and that $\bigoplus_{g\in G}N(g)=(N_{\ps})^{\ps}$.
\end{proof}

\begin{prop}
If $R_{\ps}$ is steady then so is $R$; the converse holds if\/ $\ke(\psi)$ is finite.
\end{prop}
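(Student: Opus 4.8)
The plan is to reduce both implications to the behaviour of ``finite type'' and of smallness under the coarsening functor $\bullet_{\ps}$ and, for the converse, the refinement functor $\bullet^{\ps}$, using \ref{2.30}~a), \ref{1.70}~a) and \ref{1.36}~B). The one elementary fact I would record at the outset is that a $G$-graded $R$-module $N$ is of finite type if and only if its underlying $R$-module is finitely generated (pass to the homogeneous components of a finite generating set), and likewise for $H$-graded $R_{\ps}$-modules; since $R$ and $R_{\ps}$ have the same underlying ring and $N$ and $N_{\ps}$ have the same underlying module, this shows that $N$ is of finite type over $R$ if and only if $N_{\ps}$ is of finite type over $R_{\ps}$ (and similarly with $N^{\ps}$ in place of $N$ and $(N^{\ps})_{\ps}$ in place of $N_{\ps}$, since these again share their underlying module).

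For the implication ``$R_{\ps}$ steady implies $R$ steady'' I would take a small $G$-graded $R$-module $M$. By \ref{2.30}~a) the module $M_{\ps}$ is small over $R_{\ps}$, hence of finite type over $R_{\ps}$ by steadiness of $R_{\ps}$, hence $M$ is of finite type over $R$ by the observation above. This direction requires no hypothesis on $\ke(\psi)$.

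For the converse, assume $\ke(\psi)$ finite and let $M$ be a small $H$-graded $R_{\ps}$-module. By \ref{1.70}~a) the functor $\bullet^{\ps}\colon\grmod^H(R_{\ps})\rightarrow\grmod^G(R)$ is left adjoint to $\bullet_{\ps}$, and $\bullet_{\ps}$ commutes with direct sums (proof of \ref{2.30}); as a left adjoint whose right adjoint preserves coproducts, $\bullet^{\ps}$ then preserves small objects, since for any family $(N_i)_{i\in I}$ in $\grmod^G(R)$ one has $\hm{}{M^{\ps}}{\bigoplus_i N_i}\cong\hm{}{M}{(\bigoplus_i N_i)_{\ps}}\cong\hm{}{M}{\bigoplus_i (N_i)_{\ps}}\cong\bigoplus_i\hm{}{M}{(N_i)_{\ps}}\cong\bigoplus_i\hm{}{M^{\ps}}{N_i}$ naturally and compatibly with the canonical injections, so that $\lambda_I^{M^{\ps}}$ is an isomorphism (this is \cite[1.3]{gpmn}). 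Hence $M^{\ps}$ is small over $R$, so of finite type over $R$ by steadiness of $R$, so $(M^{\ps})_{\ps}$ is of finite type over $R_{\ps}$ by the observation above. Finally $\beta'_{\psi}(M)\colon(M^{\ps})_{\ps}\twoheadrightarrow M$ (\ref{1.36}~B)) presents $M$ as a quotient of a module of finite type, whence $M$ is of finite type and $R_{\ps}$ is steady.

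I do not expect a genuine obstacle: the proposition is essentially formal given \ref{2.30}~a), \ref{1.70}~a) and \ref{1.36}~B). The two points that need a line of care are the transport of ``finite type'' across $\bullet_{\ps}$ (handled by the elementary fact in the first paragraph) and the verification that the adjunction isomorphism $\bullet^{\ps}\dashv\bullet_{\ps}$ carries the comparison map $\lambda_I^{M^{\ps}}$ to $\lambda_I^{M}$ for the coarsened family $((N_i)_{\ps})_{i\in I}$ — a routine naturality check, and exactly the content of the cited lemma from \cite{gpmn}. It is worth noting that finiteness of $\ke(\psi)$ enters the argument only through \ref{1.70}~a), i.e.\ through the existence of the left adjoint $\bullet^{\ps}$ of $\bullet_{\ps}$.
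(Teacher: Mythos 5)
Your argument is correct and follows essentially the same route as the paper: the forward direction via \ref{2.30}~a) and the transfer of ``finite type'' along $\bullet_{\ps}$, and the converse via the adjunction \ref{1.70}~a) together with \cite[1.3]{gpmn} to see that $M^{\ps}$ is small, concluding with the epimorphism $\beta'_{\psi}(M)$. The only difference is that you spell out explicitly the (implicit in the paper) observation that finite type is preserved because coarsening and refinement do not change the underlying module.
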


\begin{proof}
If $R_{\ps}$ is steady and $N$ is a small $G$-graded $R$-module then $N_{\ps}$ is small (\ref{2.30}~a)), hence of finite type, and thus $N$ is of finite type, too. Conversely, suppose $\ke(\psi)$ is finite and $R$ is steady, and let $N$ be a small $H$-graded $R_{\ps}$-module. Since $\bullet_{\ps}$ commutes with direct sums it follows that $N^{\ps}$ is small (\ref{1.70}~a), \cite[1.3 ii)]{gpmn}), hence of finite type, and thus $(N^{\ps})_{\ps}$ is of finite type, too. The canonical epimorphism $\beta'_{\psi}(N)\colon(N^{\ps})_{\ps}\twoheadrightarrow N$ (\ref{1.36}~B)) shows now that $N$ is of finite type.
\end{proof}

Next, we look at graded covariant Hom functors and characterize when they commute with coarsening functors, thus generalizing \cite[3.4]{gpmn}.

\begin{no}
The $G$-graded covariant Hom functor $\grhm{G}{R}{M}{\bullet}$ maps a $G$-graded $R$-module $N$ onto the $G$-graded $R$-module $$\grhm{G}{R}{M}{N}=\bigoplus_{g\in G}\hm{\grmod^G(R)}{M}{N(g)}.$$ For a $G$-graded $R$-module $N$ and $g\in G$ we have a monomorphism $$\hm{\grmod^G(R)}{M}{N(g)}\rightarrowtail\hm{\grmod^H(R_{\ps})}{M_{\ps}}{N_{\ps}(\psi(g))},\;u\mapsto u_{\ps}$$ in $\ab$, inducing a monomorphism $$h_{\psi}(M,N)\colon\grhm{G}{R}{M}{N}_{\ps}\rightarrowtail\grhm{H}{R_{\ps}}{M_{\ps}}{N_{\ps}}$$ in $\grmod^H(R{\ps})$. Varying $N$ we get a monomorphism $$h_{\psi}^M\colon\grhm{G}{R}{M}{\bullet}_{\ps}\rightarrowtail\grhm{H}{R_{\ps}}{M_{\ps}}{\bullet_{\ps}}.$$
\end{no}

\begin{lemma}\label{2.60}
If $N$ is a $G$-graded $R$-module such that $h_{\psi}^M(N)$ is an isomorphism then $\lambda_{\ke(\psi)}^M((N(g))_{g\in\ke(\psi)})$ is an isomorphism.
\end{lemma}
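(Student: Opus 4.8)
The plan is to unwind what it means for $h_\psi^M(N)$ to be an isomorphism and compare it degree by degree with the small-module map $\lambda_{\ke(\psi)}^M$ applied to the family $(N(g))_{g\in\ke(\psi)}$. First I would fix notation: recall that $\grhm{G}{R}{M}{N}_{\ps}$ has as its component of degree $h\in H$ the group $\bigoplus_{g\in\psi^{-1}(h)}\hm{}{M}{N(g)}$, while $\grhm{H}{R_{\ps}}{M_{\ps}}{N_{\ps}}$ has as its component of degree $h$ the group $\hm{\grmod^H(R_{\ps})}{M_{\ps}}{N_{\ps}(h)}$. Saying $h_\psi^M(N)$ is an isomorphism means all these component maps are bijective; I only need the component of degree $0\in H$, which reads
$$
\bigoplus_{g\in\ke(\psi)}\hm{}{M}{N(g)}\;\xrightarrow{\ \sim\ }\;\hm{\grmod^H(R_{\ps})}{M_{\ps}}{N_{\ps}}.
$$

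Next I would identify the target with a Hom into a direct sum in $\grmod^G(R)$. Since $\bigoplus_{g\in\ke(\psi)}N(g)$ has $\psi$-coarsening equal to $N_{\ps}^{\oplus\ke(\psi)}$ in degree $0$ — more precisely, $\bigl(\bigoplus_{g\in\ke(\psi)}N(g)\bigr)_{\ps}$ in degree $0$ is $\bigoplus_{g\in\ke(\psi)}\bigoplus_{f\in\psi^{-1}(0)}N(g)_f$; the point is that the adjunction $(\bullet_{\ps},\bullet^{\ps})$ of \ref{1.37}~a), together with $(N_{\ps})^{\ps}=\bigoplus_{g\in G}N(g)$ and restriction to the subfamily indexed by $\ke(\psi)$, gives a natural identification $\hm{\grmod^H(R_{\ps})}{M_{\ps}}{N_{\ps}}\cong\hm{}{M}{\bigoplus_{g\in\ke(\psi)}N(g)}$ compatible with the canonical injections. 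Under this identification, the degree-$0$ component of $h_\psi^M(N)$ becomes exactly $\lambda_{\ke(\psi)}^M((N(g))_{g\in\ke(\psi)})$: both send $u\in\hm{}{M}{N(g)}$ to its composite with the $g$-th canonical injection into the direct sum. So an isomorphism on the left forces $\lambda_{\ke(\psi)}^M((N(g))_{g\in\ke(\psi)})$ to be an isomorphism.

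The cleanest way to organize this is to check commutativity of a square whose top arrow is the degree-$0$ component of $h_\psi^M(N)$, whose bottom arrow is $\lambda_{\ke(\psi)}^M((N(g))_{g\in\ke(\psi)})$, and whose vertical arrows are the canonical isomorphisms just described (on the source, the identity up to the obvious reindexing; on the target, the adjunction isomorphism). Once that square commutes and the verticals are isomorphisms, the claim follows from the hypothesis that the top arrow is an isomorphism.

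The main obstacle is bookkeeping rather than anything conceptual: one must be careful that the adjunction isomorphism $\hm{\grmod^H(R_{\ps})}{M_{\ps}}{N_{\ps}}\cong\hm{}{M}{(N_{\ps})^{\ps}}$ restricts correctly to the $\ke(\psi)$-indexed summands and genuinely carries the canonical injections of $\bigoplus_{g\in\ke(\psi)}\hm{}{M}{N(g)}$ to those built into the definition of $\lambda_{\ke(\psi)}^M$. Concretely this amounts to tracing the unit $\alpha'_\psi$ and counit $\beta'_\psi$ through the description of $h_\psi(M,N)$ on degree-$0$ components and confirming that the map $u\mapsto u_{\ps}$ corresponds to post-composition with $N\rightarrowtail(N_{\ps})^{\ps}=\bigoplus_{g\in G}N(g)$ followed by projection to the $\ke(\psi)$-block. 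Since the paper has set up all these canonical morphisms explicitly in \ref{1.36} and \ref{1.37}, the verification is a direct, if slightly tedious, diagram chase.
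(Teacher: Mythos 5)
Your argument is correct, and it reaches the lemma by a more structural route than the paper. The paper never identifies the degree-zero component of $h_{\psi}^M(N)$ with $\lambda_{\ke(\psi)}^M((N(g))_{g\in\ke(\psi)})$ outright; it only uses \emph{surjectivity} of that component: given $u\colon M\rightarrow\bigoplus_{g\in\ke(\psi)}N(g)$ it forms $\nabla\circ u_{\ps}$ (where $\nabla$ is the codiagonal, i.e.\ in effect the counit $\beta'_{\psi}$ evaluated at $N_{\ps}$), writes this as a finite sum $\sum_{e\in E}(v_e)_{\ps}$ using the hypothesis, and then argues on graded components that $u(M)\subseteq\bigoplus_{e\in E}N(e)$, which gives surjectivity of $\lambda_{\ke(\psi)}^M$ (injectivity being known in general). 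Your version packages the same content as the assertion that the adjunction of \ref{1.37}~a) identifies the degree-zero component of $h_{\psi}^M(N)$ with $\lambda_{\ke(\psi)}^M((N(g))_{g\in\ke(\psi)})$; the compatibility check you defer --- that $w\mapsto w^{\ps}\circ\alpha'_{\psi}(M)$ carries $(v_g)_{\ps}$ to $\iota_g\circ v_g$ --- does go through, by naturality of $\alpha'_{\psi}$ together with the observation that $\alpha'_{\psi}(N(g))$ is the canonical injection $N(g)\rightarrowtail(N_{\ps})^{\ps}$, and this is exactly where the paper's component-by-component bookkeeping lives. Your route buys slightly more (it shows the degree-zero component of $h_{\psi}^M(N)$ is an isomorphism \emph{if and only if} $\lambda_{\ke(\psi)}^M((N(g))_{g\in\ke(\psi)})$ is) at the price of the diagram chase; the paper's route needs only surjectivity and no explicit appeal to the adjunction. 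One small correction: $(N_{\ps})^{\ps}$ equals $\bigoplus_{g\in\ke(\psi)}N(g)$, not $\bigoplus_{g\in G}N(g)$ (its degree-$f$ component is $\bigoplus_{k\in\ke(\psi)}N_{f+k}$), so your step of ``restricting to the subfamily indexed by $\ke(\psi)$'' is unnecessary --- the identification is already on the nose --- but this does not affect the validity of the argument.
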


\begin{proof}
Let $u\colon M\rightarrow\bigoplus_{g\in G}N(g)$ in $\grmod^G(R)$. As $$(\bigoplus_{g\in\ke(\psi)}N(g))_{\ps}=\bigoplus_{g\in\ke(\psi)}N_{\ps}$$ we can consider the codiagonal $\bigoplus_{g\in\ke(\psi)}N_{\ps}\rightarrow N_{\ps}$ in $\grmod^H(R_{\ps})$ as a morphism $\nabla\colon(\bigoplus_{g\in\ke(\psi)}N(g))_{\ps}\rightarrow N_{\ps}$. Composition with $u_{\ps}$ yields $\nabla\circ u_{\ps}\in\grhm{H}{R_{\ps}}{M_{\ps}}{N_{\ps}}_0$. By our hypothesis there exist a finite subset $E\subseteq\ke(\psi)$ and for $e\in E$ a $v_e\colon M\rightarrow N(e)$ in $\grmod^G(R)$ such that for $x\in M$ it holds $\nabla(u(x))=\sum_{e\in E}^rv_e(x)$. For $g\in G$ this implies $$\nabla(u(M_g))\subseteq\sum_{e\in E}N_{g+e}=\sum_{e\in E}N(e)_g.$$ Let $x=(x_g)_{g\in G}\in M$ with $x_g\in M_g$ for $g\in G$. For $g\in G$ there exists $(n^{(g)}_h)_{h\in\ke(\psi)}\in(\bigoplus_{h\in\ke(\psi)}N(h))_g=\bigoplus_{h\in\ke(\psi)}N(h)_g$ with $u(x_g)=(n^{(g)}_h)_{h\in\ke(\psi)}$, but it holds $\nabla(u(x_g))\in\sum_{e\in E}N(e)_g$ and therefore $n^{(g)}_h=0$ for $h\in\ke(\psi)\setminus E$. This implies $\nabla(u(x))\in\bigoplus_{e\in E}N(e)$, thus $u(M)\subseteq\bigoplus_{e\in E}N(e)$, and hence the claim.
\end{proof}

\begin{thm}\label{2.70}
$h_{\psi}^M$ is an isomorphism if and only if $M$ is small or $\ke(\psi)$ is finite.
\end{thm}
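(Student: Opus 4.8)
The plan is to prove the two implications separately. For the "if" direction, I would handle the two cases. If $\ke(\psi)$ is finite, then $\bullet_{\ps}$ has the left adjoint $\bullet^{\ps}$ (Theorem 1.70~a)), and more to the point $\lambda_I^M$ is an isomorphism for every \emph{finite} $I$; since finiteness of $\ke(\psi)$ makes the direct sums $\bigoplus_{g\in\psi^{-1}(h)}$ finite for each $h\in H$, a direct degreewise comparison shows $h_\psi^M(N)$ is surjective: given an $H$-homogeneous morphism $M_{\ps}\to N_{\ps}(h)$, its restriction to each $M_g$ lands in a finite sum $\bigoplus_{g'\in\psi^{-1}(h)}N_{g'}$, hence assembles (using finiteness) into a $G$-graded morphism $M\to\bigoplus_{g'\in\psi^{-1}(h)}N(g')$, i.e.\ an element of $\grhm{G}{R}{M}{N}_{\ps}$ of the right degree. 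If instead $M$ is small, then $M_{\ps}$ is small by Proposition 2.30~a); smallness of $M$ means $\lambda_I^M$ is an isomorphism for every $I$, which lets us move an $H$-homogeneous morphism $M_{\ps}\to\bigoplus_{g'\in\psi^{-1}(h)}N_{\ps}(\cdots)$ — a priori only into the product — back into the direct sum, and smallness on the $G$-side then lets us descend it to a genuine element of $\grhm{G}{R}{M}{N}_{\ps}$. In both subcases the content is that $h_\psi^M(N)$, already a monomorphism by construction, is epi, and this is checked degree by degree in $H$ using the explicit description of the functors.

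For the "only if" direction, assume $h_\psi^M$ is an isomorphism and that $\ke(\psi)$ is infinite; I must show $M$ is small. By the characterization of smallness it suffices to show $\lambda_I^M(N)$ is an isomorphism for every $G$-graded $R$-module $N$ and every set $I$, and by \cite[1.1 i)]{gpmn} it is enough to handle $I=\ke(\psi)$ with the specific family $(N(g))_{g\in\ke(\psi)}$ — this is exactly the content of Lemma 2.60, which from $h_\psi^M\bigl(\bigoplus_{g\in G}N(g)\bigr)$ being an isomorphism produces that $\lambda_{\ke(\psi)}^M\bigl((N(g))_{g\in\ke(\psi)}\bigr)$ is an isomorphism. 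So the real task is: reduce the smallness of $M$ (i.e.\ $\lambda_I^M$ an iso for all sets $I$ and all families) to the special case of index set $\ke(\psi)$ and shift-families, and then invoke Lemma 2.60. The reduction goes via \cite[1.1]{gpmn}: $M$ is small iff $M$ is $N'$-small for every $N'$; $M$ is $N'$-small iff $\lambda_I^M(N')$ is iso for all $I$; and one shows (using a surjection from a big direct sum and that the relevant $\lambda$'s for subsets/quotients behave well, as in \cite[1.1]{gpmn}) that it suffices to take $I$ of cardinality $|\ke(\psi)|$ and then, by choosing $N'=N$ appropriately and bundling, exactly the family $(N(g))_{g\in\ke(\psi)}$ against the module $\bigoplus_{g\in G}N(g)$.

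Concretely I would write: let $N$ be an arbitrary $G$-graded $R$-module and $I$ an arbitrary set. Since $\ke(\psi)$ is infinite we may assume $|I|\le|\ke(\psi)|$ after the standard reduction (a set-indexed sum is small-tested on a sum indexed by a set no larger than a fixed infinite cardinal, by splitting; cf.\ \cite[1.1]{gpmn}), so WLOG $I=\ke(\psi)$. Replacing $N$ by $\bigoplus_{g\in G}N(g)$ and using $\bigoplus_{g\in\ke(\psi)}\bigl(\bigoplus_{g'\in G}N(g')\bigr)(g)\cong\bigl(\bigoplus_{g'\in G}N(g')\bigr)^{(\ke(\psi))}$ together with \cite[1.3]{gpmn}, the isomorphism-ness of $\lambda_I^M(N)$ follows from that of $\lambda_{\ke(\psi)}^M\bigl(((\bigoplus_{g'\in G}N(g'))(g))_{g\in\ke(\psi)}\bigr)$, which is an instance of Lemma 2.60 applied to the module $\bigoplus_{g'\in G}N(g')$ (whose hypothesis $h_\psi^M$ is an isomorphism holds by assumption). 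Hence $M$ is small.

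The step I expect to be the main obstacle is the bookkeeping in the "only if" direction: packaging an \emph{arbitrary} set $I$ and \emph{arbitrary} module $N$ into the single rigid shape $\bigl((\bigoplus_{g\in G}N'(g))(g)\bigr)_{g\in\ke(\psi)}$ that Lemma 2.60 outputs, i.e.\ the reduction lemmas from \cite{gpmn} (1.1 and 1.3) that let one shrink $I$ to $\ke(\psi)$ and re-express the target family as shifts of a single module. The "if" direction is comparatively routine: it is a degreewise diagram chase, with the only subtlety being to keep straight which sums are finite (the $\ke(\psi)$-finite case) versus which need smallness to convert products back to sums (the small case), and in both cases to check that the assembled $G$-graded morphism is homogeneous of the correct degree so that it lands in $\grhm{G}{R}{M}{N}_{\ps}$ rather than just in the ungraded Hom.
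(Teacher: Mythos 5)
Your proposal is correct and follows essentially the same route as the paper: the "if" direction is the routine degreewise check (finite kernel) plus the small case, and the "only if" direction reduces smallness of $M$ to Lemma \ref{2.60} by embedding the test direct sum as a direct summand of a $\ke(\psi)$-indexed family of shifts of a shift-invariant module. The paper's own proof organizes the bookkeeping slightly differently (testing smallness on a countable family $(L_i)_{i\in\N}$ directly and using $N=\bigoplus_{g\in\ke(\psi)}L(g)$ with explicit monomorphisms, rather than first reducing to constant families via \cite[1.1]{gpmn} and using $\bigoplus_{g\in G}N(g)$), but the idea is the same.
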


\begin{proof}
If $\ke(\psi)$ is finite then this is readily seen to hold. We suppose $\ke(\psi)$ is infinite. If $M$ is small then $h_{0}^M$ is an isomorphism (\cite[3.4]{gpmn}), thus $h_{\ps}^M$ is an isomorphism, too. Conversely, we suppose $h_{\ps}^M$ is an isomorphism and prove that $M$ is small. Let $(L_i)_{i\in\N}$ be a family of $G$-graded $R$-modules, let $L=\bigoplus_{i\in\N}L_i$, let $l_i\colon L_i\rightarrowtail L$ denote the canonical injection for $i\in\N$, and let $f\colon M\rightarrow L$ in $\grmod^G(R)$. Let $N=\bigoplus_{g\in\ke(\psi)}L(g)$, and let $n_g\colon L(g)\rightarrowtail N$ denote the canonical injection for $g\in\ke(\psi)$. It is readily checked that $N(g)\cong N$ for $g\in\ke(\psi)$. As $\ke(\psi)$ is infinite we can without loss of generally suppose $\N\subseteq\ke(\psi)$. Choosing for $g\in\N$ an isomorphism $N\rightarrow N(g)$ we get a monomorphism $v\colon N^{\oplus\N}\rightarrowtail\bigoplus_{g\in\ke(\psi)}N(g)$. Furthermore, we get a monomorphism $u=\bigoplus_{i\in\N}n_0\circ l_i\colon\bigoplus_{i\in\N}L_i\rightarrowtail N^{\oplus\N}$, hence a morphism $v\circ u\circ f\colon M\rightarrow\bigoplus_{g\in\ke(\psi)}N(g)$. By \ref{2.60} there exists a finite subset $E\subseteq\ke(\psi)$ with $v(u(f(M)))\subseteq\bigoplus_{g\in E}N(g)\subseteq\bigoplus_{g\in\ke(\psi)}N(g)$. By construction of $v$ there exists a finite subset $E'\subseteq\N$ with $u(f(M))\subseteq N^{\oplus E'}\subseteq N^{\oplus\N}$. Thus, by construction of $u$, we have $f(M)\subseteq\bigoplus_{i\in E'}L_i\subseteq\bigoplus_{i\in\N}L_i=L$. Therefore, $M$ is small.
\end{proof}

At the end we get the surprising corollary mentioned before.

\begin{cor}\label{2.80}
If there exists an infinite subgroup $F\subseteq G$ such that, denoting by $\pi\colon G\twoheadrightarrow G/F$ the canonical projection, $h_{\pi}^M$ is an isomorphism, then $h_{\psi}^M$ is an isomorphism for every epimorphism $\psi\colon G\twoheadrightarrow H$ in $\ab$, and in particular $h_{0}^M$ is an isomorphism
\end{cor}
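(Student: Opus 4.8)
The plan is to deduce Corollary~\ref{2.80} directly from Theorem~\ref{2.70} by a short two-case argument. First I would apply Theorem~\ref{2.70} to the epimorphism $\pi\colon G\twoheadrightarrow G/F$: since $h_{\pi}^M$ is an isomorphism by hypothesis, the theorem tells us that either $M$ is small or $\ke(\pi)=F$ is finite. But $F$ is infinite by assumption, so the second alternative is excluded, and we conclude that $M$ is small.

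Once we know $M$ is small, the conclusion is immediate. Given any epimorphism $\psi\colon G\twoheadrightarrow H$ in $\ab$, Theorem~\ref{2.70} applied to $\psi$ says that $h_{\psi}^M$ is an isomorphism if and only if $M$ is small or $\ke(\psi)$ is finite; since we have just shown $M$ is small, the ``if'' direction gives that $h_{\psi}^M$ is an isomorphism. Taking $\psi$ to be the zero map $G\twoheadrightarrow 0$ yields in particular that $h_{0}^M$ is an isomorphism. This also reproves, via the $H=0$ case (\cite[3.4]{gpmn}), that smallness of $M$ is exactly what is being detected.

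I do not anticipate any serious obstacle here; the entire content of the corollary is packaged into Theorem~\ref{2.70}, and the only subtlety is the rhetorical ``surprise'' that a single isomorphism $h_{\pi}^M$ with $\ke(\pi)$ infinite forces \emph{all} the $h_{\psi}^M$ to be isomorphisms. That surprise is explained precisely by the fact that the dichotomy in Theorem~\ref{2.70} depends on $M$ and $\psi$ only through the smallness of $M$ and the finiteness of $\ke(\psi)$, so once the finiteness alternative is ruled out for one $\psi$ it can never be needed again. The write-up will therefore be just a few lines invoking Theorem~\ref{2.70} twice.
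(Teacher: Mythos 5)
Your proposal is correct and is exactly the argument the paper intends: the paper's proof is just ``Immediately from \ref{2.70}'', and your two applications of Theorem \ref{2.70} (first to $\pi$ to rule out the finiteness alternative and conclude $M$ is small, then to an arbitrary $\psi$) spell out precisely that deduction.
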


\begin{proof}
Immediately from \ref{2.70}.
\end{proof}



\begin{thebibliography}{99}
\bibitem{a} N. Bourbaki, \textit{\'El\'ements de math\'ematique. Alg\`ebre. Chapitre 10.} Masson, Paris, 1980.
\bibitem{chase}S. U. Chase, \textit{Direct products of modules.} Trans. Amer. Math. Soc. 97 (1960), 457--473.
\bibitem{dnro} S. D\v{a}sc\v{a}lescu, C. N\v{a}st\v{a}sescu, A. Del Rio, F. Van Oystaeyen, \textit{Gradings of finite support. Application to injective objects.} J. Pure Appl. Algebra 107 (1996), 193--206.
\bibitem{gilmer} R. Gilmer, \textit{Commutative semigroup rings.} Chicago Lectures Math., Univ. Chicago Press, Chicago, 1984.
\bibitem{gpmn} J. L. G\'omez Pardo, G. Militaru, C. N\v{a}st\v{a}sescu, \textit{When is ${\rm HOM}_R(M,-)$ equal to ${\rm Hom}_R(M,-)$ in the category $R$-$gr$?} Comm. Algebra 22 (1994), 3171--3181.
\bibitem{gpn} J. L. G\'omez Pardo, C. N\v{a}st\v{a}sescu, \textit{Topological aspects of graded rings.} Comm. Algebra 21 (1993), 4481--4493.
\bibitem{ks} M. Kashiwara, P. Schapira, \textit{Categories and sheaves.} Grundlehren Math. Wiss. 332, Springer, Berlin, 2006.
\bibitem{nas} C. N\v{a}st\v{a}sescu, \textit{Some constructions over graded rings: applications.} J. Algebra 120 (1989), 119--138.
\bibitem{nro} C. N\v{a}st\v{a}sescu, S. Raianu, F. Van Oystaeyen, \textit{Modules graded by $G$-sets.} Math. Z. 203 (1990), 605--627.
\bibitem{no1} C. N\v{a}st\v{a}sescu, F. Van Oystaeyen, \textit{Graded ring theory.} North-Holland Math. Library 28, North-Holland, Amsterdam, 1982.
\bibitem{no2} C. N\v{a}st\v{a}sescu, F. Van Oystaeyen, \textit{Methods of graded rings.} Lecture Notes in Math. 1836, Springer, Berlin, 2004.
\bibitem{pop} N. Popescu, \textit{Abelian categories with applications to rings and modules.} London Math. Soc. Monogr. Ser. 3, Academic Press, London, 1973.
\bibitem{rentschler} R. Rentschler, \textit{Sur les modules $M$ tels que $\hm{}{M}{-}$ commute avec les sommes directes.} C. R. Acad. Sci. Paris S\'er. A-B 268 (1969), 930--933.
\bibitem{cglc} F. Rohrer, \textit{Coarsening of graded local cohomology.} To appear in Comm. Algebra.
\end{thebibliography}
\end{document}